\documentclass[a4paper,leqno]{amsart}

\topmargin = 0cm
    \oddsidemargin=-1cm
    \evensidemargin=-1cm
    \setlength{\textheight}{23cm}
    \setlength{\textwidth}{16cm}

\usepackage{amsbsy,amscd,amsmath,amsopn,amssymb,amsthm}

\theoremstyle{definition}
\newtheorem{thm}{Theorem}[section]
\newtheorem{lem}[thm]{Lemma}
\newtheorem{prp}[thm]{Proposition}
\newtheorem{dfn}[thm]{Definition}

\newtheorem{rmk}[thm]{Remark}

\newcommand{\R}{{\mathbf{R}}}
\newcommand{\C}{{\mathbf{C}}}
\newcommand{\N}{{\mathbf{N}}}

\begin{document}
\title[]{Structure of the spaces of matrix monotone functions and 
of matrix convex functions and Jensen's type inequality for 
operators}

\author{HIROYUKI OSAKA}
\thanks{}
\address{Department of Mathematical Sciences, Ritsumeikan
University, Kusatsu, Shiga 525-8577, Japan}

\email{osaka@se.ritsumei.ac.jp}

\author{JUN TOMIYAMA}
\thanks{}
\address{Prof. Emeritus of Tokyo Metropolitan University,
201 11-10 Nakane 1-chome, Meguro-ku, Tokyo, Japan}

\email{jtomiyama@fc.jwu.ac.jp}

\keywords{operator monotone functions, matrix monotone functions}
\footnote{Research partial supported by Ritsumeikan Rsearch Proposal 
Grant, Ritsumeikan University 2007-2008.}

\maketitle

\begin{abstract}
Let $n \in \N$ and $M_n$ be the algebra of $n \times n$ matrices. 
We call a function $f$ matrix monotone of order $n$ or 
$n$-monotone in short whenever the inequality $f(a) \leq f(b)$ holds 
for every pair of selfadjoint matrices $a, b \in M_n$ such that 
$a \leq b$ and all eigenvalues of $a$ and $b$ are contained in $I$.
Matrix convex (concave) functions on $I$ are 
similarily defined. 
The spaces for $n$-monotone functions and $n$-convex functions 
are written as $P_n(I)$ and $K_n(I)$. We also denote that 
$P_n^+(I) = \{f \in P_n(I)\colon f(I^\circ) \subset (0, \infty)\}$, where 
$I^\circ$ means the set of inner points in $I$.

In this note we charactrize $n$-monotone functions and $n$-convex functions from the point of Jensen's type inequlity for operators.
For each $n \in \N$ and a finite interval $I$ we define the class $C_n(I)$
by the set of all positive real-valued continuous functions $f$ over $I$ 
such that $f(I^\circ) \subset (0, \infty)$ and for any $n$-subset
$S \subset I^\circ$ 
there exists a positive Pick function $h$ on $(0, \infty)$ 
interpolating $f$ on $S$. 
Then we characterize $C_n([0, 1))$ by an operator inequality. 
Moreover we show that for each $n$ 
$C_{2n}([0, \infty)) \subsetneq P_n^+([0, \infty))$.

We also discuss several assertions at each leven $n$ for which we regard them
as the problems of double piling structure of those sequences $\{P_n(I)\}_{n\in\N}$ and $\{K_n(I)\}_{n\in\N}$. 
In order
to see clear insight of the aspect of the problems, however, we choose the following three main assertions
among them and discuss their mutual dependence:

\begin{enumerate}
\item[(i)] $f(0)\leq 0$ and $f$ is $n$-convex in $[0,\alpha)$,
\item[(ii)] For each matrix $a$ with its spectrum in $[0,\alpha)$ and a contraction $c$ in the matrix algebra
$M_n$,
 \[
f(c^{\star}a c)\leq c^{\star}f(a)c,
\]
\item[(iii)] The functon $g(t)/t$ is $n$-monotone in $(0,\alpha)$.
\end{enumerate}

In particular, we show that for any $n \in \N$ two conditions $(ii)$ and $(iii)$ are equivalent.
\end{abstract}

\section{Introduction}
Let $I$ be nontrivial interval of the real line $\R$ 
(open, closed, half-open etc.). 
A real valued continuous funtion $f$ on $I$ is said to be operator 
monotone if for every selfadjoint operators $a, b$ on a Hilbert space $H$ 
$(\dim H = + \infty)$ sych that $a \leq  b$ and 
$\sigma(a), \sigma(b) \subseteq I$  we have $f(a) \leq f(b)$. 

Let $n \in \N$ and $M_n$ be the algebra of $n \times n$ matrices. 
We call a function $f$ matrix monotone of order $n$ or 
$n$-monotone in short whenever the inequality $f(a) \leq f(b)$ holds 
for every pair of selfadjoint matrices $a, b \in M_n$ such that 
$a \leq b$ and all eigenvalues of $a$ and $b$ are contained in $I$.
Matrix convex (concave) functions on $I$ are 
similarily defined as above as well as 
operator convex (concave) functions. 
We denote the spaces of operator monotone functions 
and of operator convex functions by $P_\infty(I)$ and $K_\infty(I)$ 
respectively. The spaces for $n$-monotone functions and $n$-convex functions 
are written as $P_n(I)$ and $K_n(I)$. We also denote that 
$P_n^+(I) = \{f \in P_n(I)\colon f(I^\circ) \subset (0, \infty)\}$, where 
$I^\circ$ means the set of inner points in $I$.
We note that $P_{n+1}(I) \subseteq P_n(I)$ 
and $\cap_{n=1}^\infty P_n(I) = P_\infty(I)$.
Similarily, we have $K_{n+1}(I) \subseteq K_n(I)$ and 
$\cap_{n=1}^\infty K_n(I) = K_\infty(I)$. 

The first question is whether $P_{n+1}(I)$ $($resp. $K_{n+1}(I))$ is 
strictly contained in
$P_n(I)$ $($resp. $K_n(I))$ for every $n$. 
Although most of literatures assert the existence of such gaps, 
no explicit example was given in case $n \geq 3$ in spite of 
the longtime since the paper \cite{L} of Loewner in 1934. 
In \cite{HGT} Hansen, Ji and Tomiyama presented 
an explicit example with the gap between $P_{n+1}(I)$ and $P_n(I)$ for every $n$ and an  
interval $I$.
More general discussions are treated in \cite{OST} by 
Osaka, Silvestrov and Tomiyama about gaps of $\{P_n(I)\}_{n\in \N}$ and  
we have now abundant examples of polynomials in $P_n(I)\backslash P_{n+1}(I)$ 
using the trancated momonent problems for Hankel matrices in \cite{CF} of Curto  and Fialkow, 
In  \cite{HT} 
Hansen and Tomiyama also discussed about gaps of $\{K_n(I)\}_{n\in\N}$, and 
constructed abundant examples of polynomials  in $K_n(I)\backslash K_{n+1}(I)$.

On the contrary, in \cite{AKS}  Ameur, Kaijser and Silvestrov studied subclass $C_n(0, \infty)$ of interpolation functions 
of order $n$ of $P_n^+(0, \infty)$ and showed by a theorem of Doughue \cite{Dg}
that $C_n(0,\infty)$ coinsides with the class of functions such that
for each $n$-subset $S = \{\lambda_i\}_{i=1}^n$ there exists a positive Pick function $h$ on $(0, \infty)$ 
interpolating $f$ on $S$, that is, $h(\lambda_i) = f(\lambda_i)$ for each $1 \leq i \leq n$. 
They also showed that $P_2^+(0,\infty) \subsetneq C_3(0,\infty)$ and 
$C_4(0,\infty) \subsetneq P_2^+(0,\infty)$. 
{We recall that a complex analytic  function $h$ defined on $\{z \in \C\colon \Im(z) > 0\}$ is called a 
Pick function if their range is in the closed upper half plane $\{z \in \C\colon \Im(z) \geq 0\}$.

In this note we charactrize $n$-monotone functions and $n$-convex functions from the point of Jensen's type inequlity for operators.
For each $n \in \N$ and a finite interval $I$ we define the class $C_n(I)$
by the set of all positive real-valued continuous functions $f$ over $I$ 
such that $f(I^\circ) \subset (0, \infty)$ and for any $n$-subset
$S \subset I^\circ$ 
there exists a positive Pick function $h$ on $(0, \infty)$ 
interpolating $f$ on $S$. 
Then we characterize $C_n([0, 1))$ by an operator inequality. 
Moreover we show that for each $n$ 
$C_{2n}([0, \infty)) \subsetneq P_n^+([0, \infty))$.
This is an answer to a question in \cite{AKS}.

Let $0 < \alpha \leq \infty$.There is then a well known series of equivalent assertions connecting
operator convex functions in the interval $[0,\alpha)$ and operator monotone functions in the interval
$[0,\alpha)$ including Jensen's type inequality (cf.\cite{HP}). 

In section 3 we shall discuss those ( equivalent) assertions at each leven $n$ for which we regard them
as the problems of double piling structure of those sequences $\{P_n(I)\}$ and $\{K_n(I)\}$. In order
to see clear insight of the aspect of the problems, however, we choose the following three main assertions
among them and discuss their mutual dependence:

\begin{enumerate}
\item[(i)] $f(0)\leq 0$ and $f$ is $n$-convex in $[0,\alpha)$,
\item[(ii)] For each matrix $a$ with its spectrum in $[0,\alpha)$ and a contraction $c$ in the matrix algebra
$M_n$,
 \[f(c^{\star}a c)\leq c^{\star}f(a)c,\]
\item[(iii)] The functon $g(t)/t$ is $n$-monotone in $(0,\alpha)$.
\end{enumerate}

We study these three conditions in the classes of matrix convex functions and
matrix monotone functions and show that for each $n$ the condition $(ii)$ 
is equivalent to the condition $(iii)$. The assertion that 
 $f$ is $n$-convex with $f(0) \leq 0$ implies that $g(t)$ is 
 $(n-1)$-monotone holds, however, the converse does not hold even if 
 $n = 1$. We do not know that the assertion 
 $f$ is $n$-convex with $f(0) \leq 0$ implies that $g(t)$ is  
 $n$-monotone, but it holds in the case that $f$ is a 2-matrix convex polynomial of 
 the degree not greater than 5 with $f(0) \leq 0$.
 
The authors would like to thank Dr. Yacin Ameur for a fruitful discussion 
about interpolation class $C_n(0, \infty)$ and Professor Sergei Silvestrov for 
hearty hospitality when they stayed at Lund Univ. in May, 2006.
 
\section{The class $C_n$}

\begin{dfn}
Let $I$ be a finite interval (open, closed, or open-closed).
For $n \in \N$ we denote $C_n(I)$ be the set of all positive 
real-valued continuous interpolation functions $f$ over $I$ such that 
for any $\{\lambda_i\}_{i=1}^n \subset I^\circ$ there is a 
Pick function $h\colon (0, 1) \rightarrow \R $ such that 
$f(\lambda_i) = h(\lambda_i)$ for $1 \leq i \leq n$, 
where $I^\circ$ denotes the set of inner points in $I$.
\end{dfn}

For two finite intervals of the same type such as an open, 
half-open like $[\alpha, \beta)$ and $[\gamma, \delta)$ 
one can easily find an monotone increasing linear function 
$h\colon [\gamma, \delta) \rightarrow [\gamma, \delta)$ with the 
inverse  function $h^{-1}\colon [\gamma, \delta) \rightarrow 
[\alpha, \beta)$ having the same property. As both fucntions 
$h$ and $h^{-1}$  are operator monotone and operator convex 
fnctions the set $C_n([\alpha, \beta))$  and $C_n([\gamma, \delta))$
is easily transfered each other. So we consider the case that 
$I = [0,1]$. 

The following is the characterization of a class $C_n([0, 1])$

\begin{thm}
Let $f\colon [0, 1] \rightarrow \R$ be a continuous function.
The followings are equivalent.

\begin{enumerate}
\item[$(1)$]
$f \in C_n([0, 1])$.
\item[$(2)$]
For any $\{\lambda_i\}_{i=1}^n \subset (0, 1)$ if 
$$
\sum_{i=1}^na_i\frac{(1+t)\lambda_i}{1 + (t - 1)\lambda_i} \geq 0
$$
for 
any $\{a_i\}_{i=1}^n \subset \R$ we have 
$$
\sum_{i=1}^na_if(\lambda_i) \geq 0.
$$
\item[$(3)$]
For any $A, T \in M_n(\C)$ with $T^*T \leq 1$ and $\sigma(A) \subset (0, 1)$
$$
T^*AT \leq A \Longrightarrow T^*f(A)T \leq f(A).
$$
\end{enumerate}
\end{thm}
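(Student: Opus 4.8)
The plan is to read the three statements as three faces of one convex-duality picture: proving $(1)\Leftrightarrow(2)$ by a separation argument, and $(2)\Leftrightarrow(3)$ by passing between scalar and operator positivity.

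First I would treat $(1)\Leftrightarrow(2)$. Fix an $n$-subset $S=\{\lambda_i\}\subset(0,1)$ and consider, in $\R^n$, the vectors $v_t=(\psi_t(\lambda_1),\dots,\psi_t(\lambda_n))$, where $\psi_t(\lambda)=\frac{(1+t)\lambda}{1+(t-1)\lambda}$; under the change of variable $x=\lambda/(1-\lambda)$ these become the standard extremal Pick functions $\frac{(1+t)x}{1+tx}$ on $(0,\infty)$, so by the Pick--Loewner integral representation every positive Pick function interpolating $f$ on $S$ has value vector $\int v_t\,d\mu(t)$ for a positive measure $\mu$ on the (compact) parameter arc. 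Hence ``$f\in C_n$ at $S$'' is exactly the statement that $(f(\lambda_i))$ lies in the closed convex cone generated by $\{v_t\}$. The bipolar theorem rewrites membership in this cone as: $\langle a,(f(\lambda_i))\rangle\ge0$ for every $a$ with $\langle a,v_t\rangle=\sum_i a_i\psi_t(\lambda_i)\ge0$ for all $t$ — which is precisely $(2)$. The points needing care are compactness of the arc (so the cone is closed and representing measures may be taken finitely supported) and the identification of the $v_t$ as the extreme rays; both I would quote from the interpolation theory of Donoghue and of Ameur--Kaijser--Silvestrov.

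Next I would reduce $(3)$ to diagonal $A$. Since $T^{*}T\le1$, $T^{*}AT\le A$ and the conclusion are all invariant under $A\mapsto U^{*}AU$, $T\mapsto U^{*}TU$, it suffices to take $A=\operatorname{diag}(\lambda_i)$, so that $f(A)=\operatorname{diag}(f(\lambda_i))$ sees only the values on $S$. For $(3)\Rightarrow(2)$ I would then specialise: feeding triangular and rank-one contractions $T$ into the operator inequality produces the scalar consequences (such as monotonicity of $f$ and of $f(\lambda)/\lambda$), and more systematically the admissible contractions, constrained by $T^{*}T\le1$ and $T^{*}AT\le A$, are dual to the arc $\{v_t\}$, so that specialising the matrix inequality recovers every dual inequality appearing in $(2)$.

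The hard part is $(2)\Rightarrow(3)$: manufacturing an operator inequality out of scalar interpolation data. Here I would use $(2)\Leftrightarrow(1)$ to interpolate $f$ on $\sigma(A)$ by a positive Pick function, pass to the variable in which that interpolant is operator concave and operator monotone with nonnegative value at the left endpoint, and apply the Hansen--Pedersen--Jensen inequality $T^{*}g(A)T\le g(T^{*}AT)$ for the contraction $T$, followed by operator monotonicity together with $T^{*}AT\le A$ to reach $T^{*}f(A)T\le f(A)$. The \textbf{main obstacle} — and the reason this cannot be done term by term over the representing measure — is that an individual extremal $\psi_t$ need not satisfy the operator inequality in the $\lambda$-coordinate, because the operator order $T^{*}AT\le A$ interacts nonlinearly with the change of variable that makes $\psi_t$ concave. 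What rescues the argument is the standing hypothesis that $f$ is continuous on the \emph{closed} interval $[0,1]$: this boundedness at the endpoint is rigid enough to confine the interpolant relevant on each finite spectrum to the operator concave class, and it is precisely this confinement, rather than any termwise estimate, that powers the Jensen step. Carrying out this selection of an operator concave interpolant uniformly and compatibly with the functional calculus on $\sigma(A)$ is the step I expect to demand the most care, and it is where continuity at the endpoint is indispensable.
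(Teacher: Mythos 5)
Your $(1)\Leftrightarrow(2)$ is sound and is essentially the paper's own argument in different clothing: where you invoke the bipolar theorem, the paper realizes the same duality concretely, extending the positive functional $\ell\bigl(\sum_i a_i\psi_t(\lambda_i)\bigr)=\sum_i a_i f(\lambda_i)$ by Sparr's lemma and then applying the Riesz representation theorem to produce the representing measure. The genuine gap is your implication $(3)\Rightarrow(2)$. Specializing $T^*f(A)T\le f(A)$ with $A=\mathrm{diag}(\lambda_i)$ to a vector $\xi$ yields exactly the scalar inequalities $\sum_i f(\lambda_i)\bigl(|\xi_i|^2-|(T\xi)_i|^2\bigr)\ge 0$, so your claim that ``the admissible contractions are dual to the arc $\{v_t\}$'' amounts to the assertion that every $a\in\R^n$ with $\sum_i a_i\psi_t(\lambda_i)\ge 0$ for all $t\in[0,\infty]$ lies in the closed cone generated by the vectors $\bigl(|\xi_i|^2-|(T\xi)_i|^2\bigr)_i$, as $T$ runs over contractions with $T^*AT\le A$; equivalently, by bipolarity, that every diagonal matrix $B$ satisfying $T^*BT\le B$ for all such admissible $T$ is an integral of the kernels $\psi_t(A)$. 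That statement is precisely Donoghue's theorem on exact quadratic interpolation for weighted $\ell^2$ couples, i.e.\ the content of \cite[Corollary 2.4]{AKS}; it is the hard half of the whole theorem, not a routine duality step, and you neither prove it nor cite it. The paper does not reprove it either, but it reduces to it honestly: it transfers everything to $(0,\infty)$ via $\phi(t)=t/(1-t)$, uses Hansen's inequality \cite{Hs} to check that admissibility is preserved under the transfer, and quotes \cite[Corollary 2.4]{AKS} for both directions between the interpolation class and the operator inequality.

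Two further remarks on $(2)\Rightarrow(3)$. The ``main obstacle'' you describe does not exist: each kernel $\psi_t(\lambda)=\frac{(1+t)\lambda}{1+(t-1)\lambda}$ is itself operator monotone on $[0,1)$ (for $t\in[0,1)$ its pole $1/(1-t)$ lies at or beyond $1$, and for $t\ge 1$ there is no pole in $[0,\infty)$), with $\psi_t(0)\ge 0$; hence Hansen's inequality and $T^*AT\le A$ give $T^*\psi_t(A)T\le\psi_t(T^*AT)\le\psi_t(A)$ for every single $t$, and integrating this against the representing measure of the Pick interpolant of $f$ on $\sigma(A)$ proves the implication termwise. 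Relatedly, what legitimizes your Jensen step is not the continuity of $f$ on the closed interval $[0,1]$ --- that plays no role here --- but the fact that a positive Pick function on $(0,1)$, being an integral of the $\psi_t$, automatically extends to an operator monotone function on $[0,1)$ with nonnegative value at $0$, which is exactly what Hansen's inequality requires. So your route for $(2)\Rightarrow(3)$ can be made to work and is close in spirit to the paper's $(1)\rightarrow(3)$; but the direction you flagged as hard is in fact the easy one, and the direction you waved through by ``duality'' is where the real content lies.
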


\begin{proof}
$(1) \rightarrow (3)$:

Take $T, A \in M_n(\C)$ satisfying 
$T^*T \leq 1$ and $\sigma(A) \subset (0, 1)$.
Set $\phi\colon (0,1) \rightarrow (0, \infty)$ by 
$\phi(t) = \frac{t}{1 - t}$. 
Then $\phi$ is operator monotone. 
Hence 
$T^*\phi(A)T \leq \phi(A)$ by \cite{Hs}.

Since $f \circ \phi^{-1}\colon (0, \infty) \rightarrow \R \in C_n((0, \infty))$,by \cite[Corollary 2.4]{AKS}
we have 
$$
T^*((f\circ \phi^{-1})(\phi(A))T \leq (f \circ \phi^{-1})(\phi(A)),
$$
and 
$T^*f(A)T \leq f(A)$.

$(3) \rightarrow (1)$:

Take $A, T \in M_n(\C)$ with $T^*T \leq 1$ and $\sigma(A) \subset (0, \infty)$.
Since $\phi^{-1}\colon (0, \infty)$ is operator monotone and 
$\sigma(A) \subset (0, \infty)$, from \cite{Hs} we have 
$$
T^*\phi^{-1}(A)T \leq \phi^{-1}(A).
$$
Note that $\sigma(\phi^{-1}(A)) \subset (0, 1)$. 
Then from the assumption for $f$ we have
\begin{align*}
T^*f(\phi^{-1}(A))T &\leq f(\phi^{-1})(A)\\
T^*(f \circ \phi^{-1})(A)T &\leq (f \circ \phi^{-1})(A).\\
\end{align*}
Hence $f \circ \phi^{-1} \in C_n((0, \infty))$ 
from the definition, and we know 
$f \in C_n([0, 1])$ from \cite[Corollary 2.4]{AKS} and the definition.

$(1) \rightarrow (2)$:

Let $h$ be a Pick funtion on $(0, 1)$. Then $h \circ \phi^{-1}$ is one 
on $(0, \infty)$. Then since there is a positive Radon measure 
$\rho$ on $[0, \infty]$
such that 
$$
h \circ \phi^{-1} (\lambda) = \int_{[0,\infty]}\frac{(1+t)\lambda}{1 + t\lambda}
d\rho, \ \lambda > 0,
$$
we have 
$$
h(\lambda) = \int_{[0,\infty]}\frac{(1+t)\lambda}{1 + (t-1)\lambda}
d\rho, \ \lambda \in (0, 1).
$$
For $\{\lambda_i\}_{i=1}^n \subset (0, 1)$ suppose that 
$$
\sum_{i=1}^na_i\frac{(1+t)\lambda_i}{1 + (t - 1)\lambda_i} \geq 0
$$
for 
any $\{a_i\}_{i=1}^n \subset \R$. 
Since there is a Pick function on $(0, 1)$ such that 
$f(\lambda_i) = h(\lambda_i)$ for $1 \leq i \leq n$,
\begin{align*}
\sum_{i=1}^na_if(\lambda_i) 
&= \sum_{i=1}^n\int_{[0,\infty]}a_i\frac{(1+t)\lambda_i}{1 + (t-1)\lambda_i}
d\rho\\
&= \int_{[0,\infty]}\sum_{i=1}^na_i\frac{(1+t)\lambda_i}{1 + (t-1)\lambda_i}
d\rho \geq 0
\end{align*}

$(2) \rightarrow (1)$:

Take $\{\lambda_i\}_{i=1}^n$ in $(0, 1)$ and fix them.
Set $A = C_\R[0,\infty]$ and 
$$
G = \{g\colon [0, \infty] \rightarrow \R\mid 
g(t) = \sum_{i=1}^na_i\frac{(1 + t)\lambda_i}{1 + (t -1 )\lambda_i}, 
\{a_i\}_{i=1}^n \subset \R\}.
$$
Here $A$ is a Banach space with respect to a norm 
$||k|| = \sup_{t\in [0,\infty]}|k(t)|$.

Then $G$ is a linear subspace of $A$. 
Let $\ell\colon G \rightarrow \R$ be a linear functional 
defined by
$$
\ell(\sum_{i=1}^na_i\frac{(1 + t)\lambda_i}{1 + (t -1 )\lambda_i})
= \sum_{i=1}^na_if(\lambda_i).
$$
Then $\ell$ is positive from the assumption.
Note that for any $\lambda \in (0, 1)$ we have 
$$
\min_{t\in [0,\infty]}\frac{(1+t)\lambda}{1 + (t - 1)\lambda} > 0.
$$
Take $c > 0$ such that $c\frac{(1+t)\lambda_1}{1 + (t - 1)\lambda_1} \geq 1$ 
and $t > 0$, and set 
$g_0(t) = c\frac{(1+t)\lambda_1}{1 + (t - 1)\lambda_1}$. 

Define $m \colon G \rightarrow \R$ by 
$m(g) = \sup\{g(t)\mid t \in [0, \infty]\}$. 

We will show that 
$$
\ell(g) \leq ||g + h||_{\ell(g_0)}, \ \forall h \in C_\R[0, \infty]_+,
$$
where $||k||_{\ell(g_0)} = ||h||\ell(g_0)$ and 
$C_\R[0, \infty]_+$ denotes a set of all positive functions 
in $C_\R[0, \infty]$.

For any $g \in G$ $m(g) < 0$ or $m(g) \geq 0$.
If $m(g) < 0$, $g(t) < 0$ for any $t \in [0, \infty]$, and
$$
\ell(g) < 0 \leq ||g + h||_{\ell(g_0)}, \ \forall h \in C_\R[0,\infty]_+.
$$
If $m(g) \geq 0$, we have 
\begin{align*}
g(t) &\leq m(g) \leq m(g)1 \\
&\leq m(g)g_0\\
\ell(g)
&\leq m(g + h)\ell(g_0) \\
&\leq ||g+h||\ell(g_0) = ||g + h||_{\ell(g_0)}, 
\ \forall h \in C_\R[0,\infty]_+.
\end{align*}

By Sparr's Theorem \cite[Lemma~2]{Sp} there is a positive linear functional 
$L\colon C_\R[0, \infty] \rightarrow \R$ such that
\begin{align*}
&L(k) \geq 0, \ \forall k \in C_\R[0,\infty]_+\\
&L(h) \leq ||h||_{\ell(g_0)}, \ \forall h \in C_\R[0, \infty].
\end{align*}
From the Riesz representation Theorem there is a positive Radon measure 
$\rho$ on $[0, \infty]$ suhc that 
$$
L(k) = \int_{[0, \infty]}k(t)d\rho(t), \ k \in C_R[0, \infty].
$$
Set $g_i(t) = \frac{(1+ t)\lambda_i}{1 + (t -1)\lambda_i}$ for
$1 \leq i \leq n$. Then we have 
\begin{align*}
f(\lambda_i) &= \ell(g_i)\\
&= L(g_i)\\
&= \int_{[0, \infty]}\frac{(1+ t)\lambda_i}{1 + (t -1)\lambda_i}d\rho(t)\\
&= h(\lambda_i)
\end{align*}
for $1 \leq i \leq n$ and a Pick function 
$$
h(\lambda) 
= \int_{[0, \infty]}\frac{(1+ t)\lambda}{1 + (t -1)\lambda}d\rho(t).
$$
\end{proof}

\vskip 5mm

The following is a partial answer to \cite[conjecture]{AKS}.

\begin{prp}
For each $n \in \N$ $C_{2n}([0, \infty) \subsetneq P_{n}^+([0,\infty))$.
\end{prp}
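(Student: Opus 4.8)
The plan is to treat the two assertions separately: the inclusion $C_{2n}([0,\infty)) \subseteq P_n^+([0,\infty))$, and then its strictness. Positivity being built into the definition of $C_{2n}$, the substance of the inclusion is to show that every $f \in C_{2n}([0,\infty))$ is $n$-monotone, and for this I would use Loewner's local criterion: $f$ (which is $C^1$, as is standard for interpolation functions of order $\geq 2$) is $n$-monotone on $(0,\infty)$ precisely when, for every choice of distinct points $\lambda_1,\dots,\lambda_n \in (0,\infty)$, the $n \times n$ Loewner matrix with off-diagonal entries $\frac{f(\lambda_i)-f(\lambda_j)}{\lambda_i-\lambda_j}$ and diagonal entries $f'(\lambda_i)$ is positive semidefinite. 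The idea is that membership in $C_{2n}$ furnishes, for each such $n$-tuple, a single positive Pick function matching $f$ in \emph{both} value and first derivative at all $n$ nodes, i.e. satisfying the $2n$ Hermite conditions that $C_{2n}$ is tailored to provide. Concretely, for a null sequence $\epsilon_k \downarrow 0$ I apply the hypothesis to the $2n$-subset $\{\lambda_i,\lambda_i+\epsilon_k\}_{i=1}^n$ to obtain positive Pick functions $h_k$ with $h_k(\lambda_i)=f(\lambda_i)$ and $h_k(\lambda_i+\epsilon_k)=f(\lambda_i+\epsilon_k)$ for all $i$.

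Because each $h_k$ is operator monotone on $(0,\infty)$ and pinned to the fixed finite positive values $f(\lambda_i)$, the family $\{h_k\}$ is normal and no subsequence can escape to $\infty$; passing to a subsequence, $h_k \to h$ locally uniformly on a complex neighbourhood of $(0,\infty)$ with $h$ again a positive Pick function. Then $h(\lambda_i)=f(\lambda_i)$, and in the identity $\frac{f(\lambda_i+\epsilon_k)-f(\lambda_i)}{\epsilon_k}=\frac{h_k(\lambda_i+\epsilon_k)-h_k(\lambda_i)}{\epsilon_k}$ the left-hand side tends to $f'(\lambda_i)$ (as $f \in C^1$) while the right-hand side tends to $h'(\lambda_i)$ (local uniform convergence forces $h_k'\to h'$), so $f'(\lambda_i)=h'(\lambda_i)$. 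Hence the Loewner matrix of $f$ at $\lambda_1,\dots,\lambda_n$ equals that of $h$, and the latter is positive semidefinite because $h$, being operator monotone, is in particular $n$-monotone. As the $n$-tuple was arbitrary, $f \in P_n^+([0,\infty))$.

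For strictness I would exhibit, for every $n$, a function in $P_n^+([0,\infty))$ that fails to lie in $C_{2n}([0,\infty))$; the case $n=2$ is precisely the inclusion $C_4(0,\infty)\subsetneq P_2^+(0,\infty)$ of Ameur--Kaijser--Silvestrov, and the aim is to produce witnesses at every level. The candidates I would build from the gap examples for $\{P_n\}$ --- the rational and Hankel-moment constructions of Hansen--Ji--Tomiyama and of Osaka--Silvestrov--Tomiyama --- normalised to be positive on $(0,\infty)$ and chosen $n$-monotone but lying as near the boundary of $P_n$ as possible. To certify $f \notin C_{2n}$ I would use the positivity characterization of the class (the $[0,\infty)$-analogue of condition $(2)$ of the Theorem, coming from the kernel $\frac{(1+t)\lambda}{1+t\lambda}$ that appears in the integral representation used above): it suffices to locate one $2n$-subset $\{\mu_j\} \subset (0,\infty)$ together with scalars $\{a_j\}$ with
\[
\sum_j a_j \frac{(1+t)\mu_j}{1+t\mu_j}\ge 0 \ \ (\forall t), \qquad \sum_j a_j f(\mu_j) < 0,
\]
which is exactly the obstruction to interpolating the values $f(\mu_j)$ by a positive Pick function.

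The main obstacle is this strictness step, and inside it the verification that a prescribed $2n$-subset genuinely obstructs positive-Pick interpolation. The $2n$-point positivity condition defining $C_{2n}$ is strictly stronger than the $n$-point Loewner condition defining $P_n^+$ only because a positive Pick function must also satisfy the boundary constraint of nonnegativity on $(0,\infty)$; quantifying how an $n$-monotone $f$ can breach this extra constraint at $2n$ points while still satisfying every $n$-point Loewner inequality is the delicate part. I expect the cleanest route is to pass via $\phi(t)=t/(1-t)$ to the interval $[0,1]$, reducing the question to a finite-dimensional moment/separation problem for the kernels $\frac{(1+t)\lambda}{1+(t-1)\lambda}$, and then to locate the separating functional $\{a_j\}$ by a dimension count: at $2n$ nodes one has enough free parameters to separate $f$ from the cone generated by these kernels while remaining inside the cone forced by $n$-monotonicity.
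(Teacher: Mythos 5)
Your inclusion half ($C_{2n}([0,\infty)) \subseteq P_n^+([0,\infty))$) is essentially sound: passing to a normal-families limit of Pick interpolants at the doubled nodes $\{\lambda_i,\lambda_i+\epsilon_k\}$ and comparing Loewner matrices is a standard and workable route (modulo the $C^1$ regularity of $f$, which you assert rather than prove). Note, though, that the paper does not reprove this inclusion at all; it is taken from \cite{AKS}, to whose conjecture the proposition is addressed. The genuine gap in your proposal is in the strictness half, which is the actual content of the statement. You name candidate witnesses (gap functions from \cite{HGT} and \cite{OST}) but never prove that any of them lies outside $C_{2n}$; you yourself flag this as ``the main obstacle''. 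Your proposed certificate --- a $2n$-subset $\{\mu_j\}$ and scalars $a_j$ with $\sum_j a_j \frac{(1+t)\mu_j}{1+t\mu_j}\geq 0$ for all $t$ yet $\sum_j a_j f(\mu_j)<0$ --- is, by the very duality you invoke, nothing but a restatement of $f\notin C_{2n}$, so ``locating it by a dimension count'' is not an argument: the same count of free parameters is available for functions that do belong to $C_{2n}$, and the cone conditions involve infinitely many constraints (one for each $t$).

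What the paper supplies at exactly this point is a rigidity theorem of Donoghue \cite[XIV Theorem~3]{Dg}. The gap polynomial $g_n(x)=x+\frac{1}{3}x^3+\cdots+\frac{1}{2n-1}x^{2n-1}$ of \cite{HGT} lies in $P_n^+([0,\alpha_n])$ for a suitable $\alpha_n>0$ but fails the local conditions (i), (ii) of that theorem; consequently a Pick function can agree with $g_n$ at no more than $2n-1$ points of $(0,\alpha_n)$. If $g_n$ belonged to $C_{2n}([0,\alpha_n])$, some Pick function would agree with it on a set of $2n$ points, a contradiction; hence $g_n\in P_n^+\setminus C_{2n}$ on $[0,\alpha_n]$, and composing with the inverse of $h(t)=t/(\alpha_n-t)$ transports the witness to $[0,\infty)$ (this last transfer step does match your M\"obius reduction). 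So your plan is missing precisely the tool that makes the witnesses work: a quantitative bound on the number of interpolation points a Pick function can share with a function that is $n$-monotone but not operator monotone. Without that theorem, or some substitute for it, the strictness claim remains unproved.
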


\begin{proof}
Take $n \in \N$ and consider a gap function $g_n \in P_{n}^+([0,\alpha_n])$
for some $\alpha_n > 0$ :
$$
g_n(x) = x + \frac{1}{3}x^3 + \cdots + \frac{1}{2n - 1}x^{2n-1}
$$

Suppose that $g_n \in C_{2n}([0, \alpha_n])$. 
Take a set $S \subset (0, \alpha_n)$ of $2n$ numbers and 
take a subset $S' \subset S$ with $|S'| = 2n -1$.
Since $g_n \in C_{2n}$, there is a Pick functiion of $\phi$ which 
are equal at points of $S$. Then $\phi$ and $g_n$ are equal at points of 
$S'$.

Then in \cite[XIV Theorem~3]{Dg} since $g_n$ does not satisfy 
condition $(i), (ii)$ (See \cite{HGT}.),  
$\phi$ and $g_n$  are equal only at points of $S'$. 
But this is a contradiction to the fact that $\phi$ and $g_n$ 
are equal at $S \supsetneq S'$.

Hence $g_n \notin C_{2n}([0, \alpha_n])$. 
Using an operator monotone function  
$h(t) = \frac{t}{\alpha_n - t}\colon [0, \alpha_n) \rightarrow [0,\infty)$.
We know that $g_n \circ h^{-1} \in P_n^+([0, \infty))$, but 
$g_n \circ h^{-1} \notin C_{2n}([0, \infty))$.

\end{proof}

\section{Double piling structure of matrix monotone functions and matrix convex functions }
Through this section we use symbols $a, b, \dots$ for matrices.

As we have mentioned in the introduction, there are basic equivalent assertions known for operator monotone functions and operator convex 
functions (cf.\cite {HP}). Namely we have

Theorem A.
For \(0 < \alpha \leq \infty \), the following assertions for a real valued continuous function f in $[0,\alpha)$ are equivalent:

(1) $f$ is operator convex and \(f(0) \leq 0\),

(2) For an operator $a$ with its spectre in $[0,\alpha)$ and a contraction $c$, 
\[ f(c^{\star}ac) \leq c^{\star}f(a)c,\]
(3) For two operators $a,b$ with their spectra in $[0,\alpha)$ and two contractions $c,d$ such that \(c^{\star}c + d^{\star}d\leq 1\) we have 
the inequality
\[ f(c^{\star}ac + d^{\star}bd) \leq c^{\star}f(a)c + d^{\star}f(b)d, \]
(4) For an operator $a$ with its spectre in $[0,\alpha)$ and a projection $p$ we have the inequality,
\[ f(pap) \leq pf(a)p \,\]
(5) The function \( g(t) = f(t)/t\) is operator monotone in the open interval $(0,\alpha)$.

In this section, we shall discuss mutual relationships of the above assertions when we restrict the property of the function $f$ at each fixed level
$n$ , that is, when $f$ and $g$ are assumed to be only $n$-matrix convex and $n$- matrix monotone. We regard the problem as the problem of 
double piling structure of those decreasing sequences $\{P_n(I)\}$ and $\{K_n(I)\}$ down to $P_{\infty}(I)$ and $K_{\infty}(I)$ respectively. 
In this sense, standard double piling structure known for these assertions before is 
the following. We describe those implications by using the convention below. 
Namely, we say the assertions (A) and (B) is in a relation 
$m \prec n$ if (A) holds for the matrix algebra $M_m$
then (B) holds for the matrix algebra $M_n$, and write \((A)_m \prec (B)_n\).

Theorem A is proved in the following way.
\[
(1)_{2n} \prec (2)_n \prec (5)_n \prec (4)_n, (2)_{2n} \prec (3)_n \prec (4)_n, \hbox{and}\ (4)_{2n} \prec (1)_n.
\]
Therefore, those assertions become equivalent when $f$ is operator convex and $g$ is operator monotone by the piling  structure.
\vskip 2mm
Thus, the basic problem for double piling structure is to find the minimum difference of degrees between those gaped assertions.
Since however even single piling problems are clarified recentry, as we have mentioned above, in spite of a long history of monotone matrix functions
and convex matrix functions, little is known for the double piling structure except the result by Mathias (\cite {M}), which asserts that
a $2n$-monotone function in the positive half line $[0, \infty)$ becomes $n$-concave.   
 
Now in order to make our investigations more transparent we mainly concentrate our discussions to the relationship about (1), (2) and (5). In fact,
we need not say anything about (4) when $n = 1$, and for the reason choosing (2) instead of (3) we just borrow the witty expression 
in \cite{HP}," correctness must bow to applicability". 
Before going into our discussions, we state each assertion in a precise way 
but skipping the condition of the spectrum of a matrix $a$. Namely, in the interval $[0,\alpha)$ we consider the following assertions.
\vskip 2mm
(i) $f(0) \leq 0$, and $f$ is n-convex,

(ii) For each positive semidefinite element a and a contraction c in $M_n$, we have
\[
f(c^{\star}a c) \leq c^{\star}f(a)c,
\]
(iii) $g$ is n-monotone in the interval $(0,\alpha)$. 
\vskip 2mm
We shall show then the equivalency of the assertions (ii) and (iii). Hence the problem is reduced to the relationship between (i) and (iii) (or
(ii)). Namely, we have the following
 
\begin{thm}\label{Th:(ii)-(iii)}
 The assertions (ii) and (iii) are equivalent.
\end{thm}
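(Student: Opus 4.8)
The plan is to prove both implications by explicit constructions in $M_n$ that convert the Jensen inequality for $f$ into a monotonicity statement for $g(t)=f(t)/t$ and back, staying at the same matrix level $n$ throughout (so that no integral representation is needed). The starting observation is that on the open interval $(0,\alpha)$ one has $f(t)=t\,g(t)$, so for a positive definite $a$ with $\sigma(a)\subset(0,\alpha)$ the functional calculus gives $f(a)=a^{1/2}g(a)a^{1/2}$ and $a^{-1/2}f(a)a^{-1/2}=g(a)$. These two identities are exactly what let us pass between the two inequalities.

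For (ii) $\Rightarrow$ (iii): let $0<a\le b$ be positive definite with spectra in $(0,\alpha)$, and set $c=b^{-1/2}a^{1/2}$. Then $c^{*}c=a^{1/2}b^{-1}a^{1/2}\le I$ (because $a\le b$ forces $b^{-1}\le a^{-1}$), so $c$ is a contraction, and a direct computation gives $c^{*}bc=a$. Applying (ii) to the pair $(b,c)$ yields $f(a)=f(c^{*}bc)\le c^{*}f(b)c=a^{1/2}\big(b^{-1/2}f(b)b^{-1/2}\big)a^{1/2}=a^{1/2}g(b)a^{1/2}$. Since also $f(a)=a^{1/2}g(a)a^{1/2}$, conjugating both sides by $a^{-1/2}$ gives $g(a)\le g(b)$, which is precisely the $n$-monotonicity of $g$ on $(0,\alpha)$. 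This direction requires no approximation.

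For (iii) $\Rightarrow$ (ii): I would first reduce to the case where $a$ and $d:=c^{*}ac$ are both positive definite (with $c$ an invertible strict contraction), recovering the general positive semidefinite case at the end by replacing $a$ by $a+\varepsilon I$ and $c$ by a nearby invertible contraction and letting the perturbations tend to $0$, using continuity of $f$ and $g$ under functional calculus. In the invertible case set $v:=a^{1/2}cd^{-1/2}$; then $v^{*}v=d^{-1/2}c^{*}acd^{-1/2}=I$, so $v$ is unitary in $M_n$. Writing $c=a^{-1/2}vd^{1/2}$, the contraction condition $c^{*}c\le I$ becomes $v^{*}a^{-1}v\le d^{-1}$, i.e. $a^{-1}\le vd^{-1}v^{*}=(vdv^{*})^{-1}$, and inverting gives $vdv^{*}\le a$. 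Since $\sigma(vdv^{*})=\sigma(d)$ and $\sigma(a)$ both lie in $(0,\alpha)$, the $n$-monotonicity of $g$ applies and yields $v\,g(d)\,v^{*}=g(vdv^{*})\le g(a)$, hence $g(d)\le v^{*}g(a)v$. Conjugating by $d^{1/2}$ and using $a^{1/2}c=vd^{1/2}$ (so $d^{1/2}v^{*}=c^{*}a^{1/2}$) one obtains $f(d)=d^{1/2}g(d)d^{1/2}\le d^{1/2}v^{*}g(a)vd^{1/2}=c^{*}a^{1/2}g(a)a^{1/2}c=c^{*}f(a)c$, which is (ii).

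The main obstacle is this last implication. The algebraic heart—the unitary $v$ and the inequality $vdv^{*}\le a$ that feeds into $n$-monotonicity—works verbatim only when $a$ and $d$ are invertible, so the delicate part is the limiting argument: one must perturb $a$ and $c$ so that both $a$ and $c^{*}ac$ become positive definite while keeping $\|c\|\le 1$ and all spectra inside $[0,\alpha)$, and then justify passing to the limit by continuity. In addition, handling a genuinely positive semidefinite $a$ (with $0$ in its spectrum) forces the condition $f(0)\le 0$; I would derive this from (iii) itself, since $g$ being $n$-monotone is in particular increasing, so $f(t)=t\,g(t)$ has a nonpositive limit as $t\to 0^{+}$, and continuity of $f$ then gives $f(0)\le 0$.
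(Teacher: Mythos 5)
Your proof is correct, and in both directions it runs differently from the paper's, so a comparison is worthwhile. For (ii) $\Rightarrow$ (iii) the paper merely cites the implication as previously known, whereas you reprove it with the contraction $c=b^{-1/2}a^{1/2}$ and $c^{\star}bc=a$; that is the standard argument and is sound. For (iii) $\Rightarrow$ (ii), both proofs ultimately feed the same order relation $a^{1/2}cc^{\star}a^{1/2}\leq a$ into the $n$-monotonicity of $g$, but they exit from it differently. The paper keeps the possibly non-invertible contraction $c$, regularizes inside the monotonicity step via $a^{1/2}(cc^{\star}+\varepsilon)a^{1/2}\leq (1+\varepsilon)a$, converts the $g$-inequality into an $f$-inequality by congruence, and passes from $f(a^{1/2}cc^{\star}a^{1/2})$ to $f(c^{\star}ac)$ through the intertwining identity $c^{\star}a^{1/2}f(a^{1/2}cc^{\star}a^{1/2})=f(c^{\star}ac)c^{\star}a^{1/2}$; the resulting inequality $cf(c^{\star}ac)c^{\star}\leq cc^{\star}f(a)cc^{\star}$ controls the quadratic form only on the range of $c^{\star}$, so the paper must decompose $H_n=[\mathrm{Range}\,c^{\star}]\oplus[\mathrm{Ker}\,c]$ and use $f(0)\leq 0$ (derived from (iii), exactly as you derive it) on the kernel part. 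You instead perturb $c$ to an invertible contraction, note that $d=c^{\star}ac$ is then unitarily equivalent to $a^{1/2}cc^{\star}a^{1/2}$ via the explicit unitary $v=a^{1/2}cd^{-1/2}$ (indeed $vdv^{\star}=a^{1/2}cc^{\star}a^{1/2}\leq a$), and the congruence by $d^{1/2}$ together with $vd^{1/2}=a^{1/2}c$ yields (ii) with no kernel analysis; all degeneracies are pushed into one norm-limit, which is legitimate because invertible contractions are dense in the contractions of $M_n$, the relevant spectra stay in a fixed compact subset of $[0,\alpha)$, and the functional calculus of a fixed continuous function is norm-continuous there. Your route buys a cleaner algebraic core in which $f(0)$ never explicitly appears; the paper's route buys an argument that never perturbs the given contraction, at the cost of the explicit kernel decomposition. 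Both arguments stay at the same matrix level $n$, as the theorem requires.
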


\begin{proof}  Since the implication from (ii) to (iii) is known before, we need only show the converse implication. 
Take positive semidefinite matrix $a$ with its spectrum in $[0,\alpha)$ and a contraction $c$ in $M_n$. We may assume that $a$ is
invertible. Take a positive number $\varepsilon > 0 $. From the order relation,
\[
a^{1/2}(cc^{\star} + \varepsilon)a^{1/2}\leq (1 + \varepsilon)a
\]
We have the inequality 
\[
\frac{f(a^{1/2}(cc^{\star} + \varepsilon)a^{1/2})}{a^{1/2}(cc^{\star} + \varepsilon)a^{1/2}}\leq \frac{f((1 + \varepsilon)a)}{(1 + \varepsilon)a}.
\]
Hence producting the element \(a^{1/2}(cc^{\star} + \varepsilon)a^{1/2}\) from both sides and letting $\varepsilon$ go to zero we get the inequality
\[
a^{1/2}(cc^{\star})a^{1/2}f(a^{1/2}cc^{\star} a^{1/2})\leq a^{1/2}cc^{\star}f(a)cc^{\star}a^{1/2}.
\]
Note that here we have the identity,
\[ c^{\star}a^{1/2}f(a^{1/2}cc^{\star}a^{1/2}) = f(c^{\star}ac)c^{\star}a^{1/2}.
\]
Therefore, the above inequality comes to  the form,
\[
a^{1/2}cf(c^{\star}ac)c^{\star}a^{1/2} \leq a^{1/2} cc^{\star}f(a) cc^{\star}a^{1/2}.
\]
It follows that 
\[ cf(c^{\star}ac)c^{\star} \leq cc^{\star}f(a)cc^{\star}\]
Hence for a vector $\xi$ in the underlying space $H_n$  we have 
\[
 (f(c^{\star}ac)c^{\star}\xi, c^{\star}\xi)\leq ((c^{\star}f(a)c)c^{\star}\xi, c^{\star}\xi).
\]
Now consider the orthogonal decomposition of $H_n$
with respect to the operator $c$ such as  \(H_n = [\hbox{Range}\ c^{\star}] \oplus [\hbox{Ker}\ c]\)
and write \(\xi = \xi_1 + \xi_2\). Then,
\begin{eqnarray*}
(f(c^{\star}ac)\xi,\xi)& = &(f(c^{\star}ac)\xi_1 + f(0)\xi_2, \xi_1 + \xi_2)\\
           &= & (f(c^{\star}ac)\xi_1,\xi_1) + (f(c^{\star}ac)\xi_1,\xi_2) + f(0)\Vert \xi_2 \Vert^2\\
           & = & (f(c^{\star}ac)\xi_1,\xi_1) + f(0)\Vert \xi_2\Vert^2 \\
           &\leq& (f(c^{\star}ac)\xi_1,\xi_1) \\
           &\leq& (c^{\star}f(a)c\xi_1,\xi_1)\\
           &= & (c^{\star}f(a)c\xi_,\xi ).
\end{eqnarray*}

Thus, \(f(c^{\star}ac) \leq c^{\star}f(a)c\).

In the above compitation, we have used the fact that \(f(0)\leq 0\), which is derived from the monotonicity of $g(t)$.
For, if $g(t)$ is monotone increasing we have the inequality \(f(t)\leq\frac{ f(t_0)}{t_0} t\) for every \(0 < t\leq t_0\).
This completes the proof.
\end{proof}

\vskip 2mm

We shall discuss next the gap between (i) and (iii). In the proof we need the concept of divided differences .
For a sufficientry smooth function $f(t)$ we denote its n-th divided difference 
for n-tuple of points $\{t_1,t_2,\ldots,t_n\}$ deffined as, when they are all different,
\[
[t_1,t_2]_f = \frac{f(t_1) - f(t_2)}{t_1 - t_2} ,\mbox{and inductively}
\]
\[
[t_1,t_2,\ldots,t_n]_f = \frac{[t_1,t_2,\ldots,t_{n-1}]_f - [t_2,t_3,\ldots,t_n]_f}{t_1 - t_n}.
\]
And when some of them coincides such as $t_1 = t_2 $ and so on, we put as 
\[[t_1,t_1]_f = f'(t_1)\quad \mbox { and inductively}
\]
\[
 [t_1,t_1,\ldots,t_1]_f = \frac{f^{(n)}(t_1)}{n!}.
\]

When there appears no confusion we often skip the refering function $f$. We notice here the most important property of divided differences
is that it is 
free from permutations of $\{t_1,t_2,\ldots,t_n\}$.

\begin{thm} The assertion (i) implies that $g(t)$ is $n - 1$-monotone in $(0,\alpha)$.
\end{thm}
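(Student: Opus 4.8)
The plan is to strip off the singular part of $g$ at the endpoint $0$, transfer the remaining piece to a statement about second divided differences of $f$ based at $0$, and then read the desired monotonicity off the $n$-convexity of $f$. First I would remove the constant term. Set $\tilde f = f - f(0)$; adding a constant preserves matrix convexity, so $\tilde f$ is again $n$-convex and now $\tilde f(0)=0$. Writing $u(t)=t$ for the identity function, decompose
\[
g(t)=\frac{f(t)}{t}=\frac{\tilde f(t)}{t}+f(0)\,\frac1t=:\tilde g(t)+f(0)\,\frac1t .
\]
Since $f(0)\le 0$ and $-1/t$ is operator monotone on $(0,\alpha)$, the function $f(0)/t$ is a nonnegative multiple of an operator monotone function, hence operator monotone and in particular $(n-1)$-monotone. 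Because the $(n-1)$-monotone functions form a cone closed under addition (Loewner matrices add, and the positive semidefinite matrices form a cone), it suffices to prove that $\tilde g=\tilde f/u$ is $(n-1)$-monotone.

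The second step transports this to divided differences. From $\tilde f = u\cdot\tilde g$ and the Leibniz rule for divided differences, using $[t_i]_u=t_i$, $[t_i,t_j]_u=1$ and $[t_i,t_j,t_k]_u=0$, one obtains for every node system
\[
[t_0,t_1,\dots,t_m]_{\tilde f}=t_0\,[t_0,t_1,\dots,t_m]_{\tilde g}+[t_1,\dots,t_m]_{\tilde g}.
\]
Evaluating with $t_0=0$, which is a legitimate node since $\tilde f(0)=0$, yields the deflation identity $[0,t_1,\dots,t_m]_{\tilde f}=[t_1,\dots,t_m]_{\tilde g}$; in particular the Loewner matrix of $\tilde g$ at any $t_1,\dots,t_{n-1}\in(0,\alpha)$ coincides with the matrix of second divided differences of $\tilde f$ based at $0$,
\[
\bigl([t_i,t_j]_{\tilde g}\bigr)_{i,j=1}^{n-1}=\bigl([0,t_i,t_j]_{\tilde f}\bigr)_{i,j=1}^{n-1}.
\]
Thus $\tilde g$ is $(n-1)$-monotone exactly when this $(n-1)\times(n-1)$ matrix is positive semidefinite for all such tuples.

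The last and decisive step is to deduce positivity of $\bigl([0,t_i,t_j]_{\tilde f}\bigr)_{i,j=1}^{n-1}$ from the $n$-convexity of $\tilde f$. I would begin from the local characterization of $n$-convexity, namely positivity at every interior point of the Hankel matrix $\bigl(f^{(i+j)}(t)/(i+j)!\bigr)_{i,j=1}^{n}$, and recover the second divided differences based at $0$ by confluence together with a passage of the base point to the endpoint $0$, using continuity of $\tilde f$ on $[0,\alpha)$ and the normalization $\tilde f(0)=0$ to control the boundary term. The prototype that explains why $\tilde f(0)=0$ is exactly the input needed is the first-order case: with $h(t)=t\tilde f'(t)-\tilde f(t)=t^2\tilde g'(t)$ one has $h(0)=0$ and $h'(t)=t\tilde f''(t)\ge 0$, whence $h\ge 0$ and $\tilde g$ is monotone, the vanishing of $h(0)$ forcing the sign. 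The main obstacle is carrying this boundary passage out at the matrix level: the $n$-convexity of $\tilde f$ supplies an $n\times n$ positivity at each interior point, but moving the base point to $0$ and matching nodes costs exactly one order, so only the $(n-1)\times(n-1)$ block stays under control. This is precisely why the argument delivers $(n-1)$-monotonicity and stops short of $n$-monotonicity, consistent with the latter being left open.
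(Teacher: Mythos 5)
Your first two steps are correct, and they coincide with the paper's own reductions: the decomposition $g=\tilde g+f(0)/t$ with $f(0)/t$ operator monotone is exactly the content of the paper's Lemma~3.3, and your deflation identity $\bigl([t_i,t_j]_{\tilde g}\bigr)=\bigl([0,t_i,t_j]_{\tilde f}\bigr)$ is the paper's observation that $[t_i,t_j]_{h_s}=[t_i,t_j,s]_f$ for $h_s(t)=[t,s]_f$, specialized to $s=0$. The problem is that everything after that is not a proof: the positive semidefiniteness of $\bigl([0,t_i,t_j]_{\tilde f}\bigr)_{i,j=1}^{n-1}$ is the entire content of the theorem, and you only describe how you ``would begin,'' explicitly labelling the needed matrix-level boundary passage as ``the main obstacle'' without resolving it. The scalar prototype $h(t)=t\tilde f'(t)-\tilde f(t)$ does not generalize in any evident way, so the argument stops exactly where the theorem starts.

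Moreover, the route you sketch points the wrong way. Confluence lets one pass from divided differences to derivatives, i.e.\ from the global characterization to the local Hankel one; recovering global divided-difference positivity from pointwise positivity of $\bigl(f^{(i+j)}(t)/(i+j)!\bigr)$ is essentially Kraus' theorem itself, so your plan would have to reprove (or silently assume) the very tool that makes the theorem immediate. What closes the gap is to invoke the global divided-difference characterization of $n$-convexity directly (\cite[Theorem 6.6.52 (1)]{HJ}): for any $n$ points $\{t_1,\dots,t_{n-1},s\}$ in the interval, the matrix $\bigl([t_i,t_j,s]_f\bigr)_{i,j=1}^{n-1}$ is positive semidefinite. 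Combined with your identity this says the Loewner matrix of $h_s$ is positive semidefinite at every $(n-1)$-tuple, hence $h_s$ is $(n-1)$-monotone by \cite[Theorem 6.6.36 (1)]{HJ}; no limiting procedure at the endpoint is needed, since $s=0$ is a legitimate node (f is $n$-convex on $[0,\alpha)$, with a standard regularization to ensure smoothness), and then your step one (Lemma~3.3 of the paper) finishes the argument. The count you emphasize at the end is also cleaner in this formulation: $n$-convexity controls $n$ nodes, one of which is spent at $s=0$, leaving an $(n-1)\times(n-1)$ Loewner matrix --- that, and not a boundary-passage loss, is why the conclusion is $(n-1)$-monotonicity.
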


The theorem shows that the gap from (i) to (iii) as well as (ii) is at most one, that is ,\((i)_n\prec (iii)_{n-1}\).

This improves an usual known gap $(i)_{2n} \prec (ii)_n$.


To prove this proposition we use the following simple observation.


\begin{lem}\label{simpleobservation}
Let $f$ be a function on $[0, \alpha)$ with $f(0) \leq 0$ and let $h(t) = f(t) - f(0)$. 
Then 
\begin{enumerate}
\item[$(1)$]
$f$ is n-convex if and only if $h$ is n-convex.
\item[$(2)$]
If $k(t) = \frac{h(t)}{t}$ is n-monotone, then $g(t) = \frac{f(t)}{t}$ is n-monotone.
\end{enumerate}
\end{lem}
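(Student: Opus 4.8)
The plan is to reduce both parts to elementary stability properties of the relevant function classes: $n$-convexity is unaffected by adding a scalar, while $n$-monotonicity is preserved under adding an operator monotone function. The only genuine idea needed is to recognize the reciprocal function hidden in the difference $g - k$.

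For part $(1)$, I would start from the fact that $h$ and $f$ differ by the scalar $f(0)$. By the functional calculus, for every selfadjoint $a \in M_n$ with spectrum in $[0,\alpha)$ one has $h(a) = f(a) - f(0)I$. Writing out the defining inequality of $n$-convexity,
\[
f(\lambda a + (1-\lambda)b) \leq \lambda f(a) + (1-\lambda) f(b), \qquad \lambda \in [0,1],
\]
and substituting $f = h + f(0)$, the constant terms $f(0)I$ cancel on both sides, since $\lambda + (1-\lambda) = 1$, leaving precisely the corresponding inequality for $h$. Hence $f$ is $n$-convex if and only if $h$ is, with nothing further to check.

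For part $(2)$, the key algebraic step is the decomposition
\[
g(t) = \frac{f(t)}{t} = \frac{h(t) + f(0)}{t} = k(t) + f(0)\cdot\frac{1}{t}.
\]
Since $f(0)\le 0$, the second summand equals $|f(0)|\cdot(-1/t)$, and the function $t\mapsto -1/t$ is operator monotone on $(0,\alpha)$, being a Pick function, hence $n$-monotone for every $n$. I would then use the fact that the set of $n$-monotone functions on $(0,\alpha)$ is a convex cone, closed under addition and under multiplication by nonnegative scalars: if $a\le b$ then both $k(a)\le k(b)$ and $-a^{-1}\le -b^{-1}$, and adding these inequalities gives $g(a)\le g(b)$. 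Thus $g = k + |f(0)|\cdot(-1/t)$ is $n$-monotone.

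I do not anticipate a real obstacle, as the statement is a bookkeeping device whose only subtlety lies in isolating the reciprocal term. The one point requiring care is the sign: the hypothesis $f(0)\le 0$ is exactly what makes the coefficient $|f(0)|$ nonnegative, so that the operator monotone function $-1/t$ is \emph{added} rather than subtracted. Were $f(0)$ positive, this step would fail, which clarifies why the assumption $f(0)\le 0$ is imposed.
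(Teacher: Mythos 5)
Your proposal is correct and follows essentially the same route as the paper: part $(1)$ is the standard cancellation of the additive constant (which the paper simply cites as well known), and part $(2)$ rests on exactly the decomposition $g(t) = k(t) + f(0)\cdot t^{-1}$ together with the fact that $a \leq b$ for positive invertible matrices gives $-a^{-1} \leq -b^{-1}$, which is precisely the inequality manipulation $g(a) \leq g(b) + f(0)(a^{-1}-b^{-1}) \leq g(b)$ carried out in the paper's proof. The only difference is presentational: you package the sign argument as ``$-1/t$ is operator monotone and $n$-monotone functions form a cone,'' whereas the paper writes out the same inequalities directly.
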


\begin {proof}
$(1):$ It is a well-known fact.

$(2):$
 Since $\frac{h(t)}{t}$ is monotone,
\[
(f(a) - f(0))a^{-1}\leq (f(b) - f(0))b^{-1}.
\]
Hence we have
\[g(a)\leq g(b) + f(0)(a^{-1}  - b^{-1})\leq g(b)
\]
because $f(0)\leq 0$. 

This implies that
we conclude that $g$ is 2-monotone.
\end{proof}

{\bf Proof of Theorem 3.2:}  We may assume as in the proof of Theorem 1 that $f(t)$ is twice continuously differentiable. Now take a point $s$ and
fix. We consider the function \(h_s(t) = [t,s]_f\), then for two points $\{r_1,r_2\}$ we have
\[
[r_1,r_2,s]_f = [r_1,s,r_2]_f = [r_1,r_2]_{h_s}
\]
Let $\{t_1,t_2,\ldots,t_{n-1}\}$ be an arbitrary $n-1$ tuple of points in the interval $(0,\alpha)$. Then by the characterization theorem
of $n$-convexity \cite[Theorem 6.6.52 (1)]{HJ}, we see that the matrix $([t_i,t_j,s])$ is positive semidefinite for $n$-points
$\{t_1.t_2,\ldots,t_{n-1},s\}$. Hence its submatix $([t_i,t_j]_{h_s})$ is positive semidefinite, which means by \cite[Theorem 6.6.36 (1)]{HJ}
that the function $h_s(t)$ is a monotone function of degree $n-1$. Thus, in particular, \(h_0(t)= \frac{f(t) - f(0)}{t}\) becomes $n-1$-monotone.
From Lemma~\ref{simpleobservation} we have conclusion.
\hfill  \qed

\vskip 1mm

\begin{rmk}
 In connection with this theorem it would be important to note that for a finite interval we can never get the result of
Mathias' type mentioned before. In fact, in such an interval for any $2n$ we can always find a $2n$-monotone and $2n$-convex polynomial $f(t)$
by \cite[Proposition 1.3]{HT}. Therefore, if $f(t)$ become $n$-concave it had to be a constant. 
\end{rmk}

Now whether there exists an exact gap from (i) to (iii) we confirm first the following observation. Though it is almost trivial, we state 
it as a proposition for completeness sake of 
our arguments.

\begin{prp}
For n = 1, the assertion (i) implies (iii) but the converse does not hold. 
\end{prp}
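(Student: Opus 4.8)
The plan is to unwind both assertions to their scalar meaning at level $n=1$ and then treat the two directions separately. At $n=1$, being $1$-convex means ordinary convexity of $f$ on $[0,\alpha)$ and being $1$-monotone means ordinary (nondecreasing) monotonicity; thus (i) reads ``$f(0)\le 0$ and $f$ is convex'' while (iii) reads ``$g(t)=f(t)/t$ is nondecreasing on $(0,\alpha)$''. Note that the preceding theorem only yields $(n-1)$-monotonicity, which is vacuous here, so the forward implication must be argued directly rather than quoted.

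For (i) $\Rightarrow$ (iii), I would first invoke Lemma~\ref{simpleobservation} to pass to $h(t)=f(t)-f(0)$, which is again convex with $h(0)=0$, so that it suffices to show $k(t)=h(t)/t$ is nondecreasing and then apply part $(2)$ of the lemma. The monotonicity of $k$ is the standard ``slope from a fixed point'' property of convex functions: for $0<s<t$, writing $s=(s/t)\,t+(1-s/t)\,0$ and using convexity together with $h(0)=0$ gives $h(s)\le (s/t)h(t)$, i.e. $k(s)\le k(t)$. Equivalently, and without the lemma, one may decompose $g(t)=\frac{f(t)-f(0)}{t}+\frac{f(0)}{t}$ and observe that the first summand is nondecreasing by the same secant argument while the second is nondecreasing because $f(0)\le 0$ makes $t\mapsto f(0)/t$ nondecreasing on $(0,\infty)$; a sum of two nondecreasing functions is nondecreasing.

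For the converse I would exhibit an explicit $f$ with $f(0)\le 0$ for which (iii) holds but $f$ fails to be convex. A convenient choice is the cubic $f(t)=6t^2-t^3$ on the interval $[0,3)$: here $f(0)=0$ and $g(t)=6t-t^2$ has $g'(t)=6-2t\ge 0$ throughout $(0,3)$, so $g$ is nondecreasing and (iii) holds, whereas $f''(t)=12-6t$ is negative on $(2,3)$, so $f$ is not convex and (i) fails. The transcendental example $f(t)=t(1-e^{-t})$ on any $[0,\alpha)$ with $\alpha>2$ works identically, since $g(t)=1-e^{-t}$ is increasing while $f''(t)=e^{-t}(2-t)<0$ for $t>2$.

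I expect no serious obstacle, as the statement is essentially elementary; the only real care is in the bookkeeping of the counterexample, where one must choose the right endpoint $\alpha$ so that $g$ stays nondecreasing on the \emph{entire} interval $(0,\alpha)$ even though $f$ is non-convex near the right end. It is worth recording the accompanying structural remark that, conversely, (iii) already forces $f(0)\le 0$: if $f(0)>0$ then $g(t)=f(t)/t\to+\infty$ as $t\to0^+$, which is incompatible with $g$ being nondecreasing. Hence the sign condition in (i) is automatic under (iii), and any genuine counterexample to the converse must be a non-convex function, exactly as in the examples above.
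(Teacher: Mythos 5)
Your proof is correct, and its core device --- an explicit cubic whose quotient $g(t)=f(t)/t$ is an increasing downward parabola while $f$ itself loses convexity near the right endpoint --- is exactly the paper's device: the paper takes $f(t)=-t^3+2t^2-t$ on $[0,1)$, so that $g(t)=-(t-1)^2$, and your $f(t)=6t^2-t^3$ on $[0,3)$ is essentially this example rescaled. Two differences are worth recording. First, you actually prove the direction (i) $\Rightarrow$ (iii) by the secant-slope argument combined with Lemma~\ref{simpleobservation}, and you rightly note that the preceding theorem gives only $(n-1)$-monotonicity and is vacuous at $n=1$; the paper treats this direction as known and ``only mentions the converse,'' so your writeup is more complete on that point. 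Second, the paper supplies one ingredient you only gesture at: since assertions (i) and (iii) are formulated on an arbitrary interval $[0,\alpha)$ with $0<\alpha\le\infty$, a counterexample is needed for \emph{every} $\alpha$, and the paper obtains this by composing its example with a monotone transfer function from $[0,\alpha)$ onto $[0,1)$. Your examples as stated work only for $2<\alpha\le 3$ (the cubic) or $2<\alpha\le\infty$ (the function $t(1-e^{-t})$); for small $\alpha$ you should make the rescaling explicit, e.g. replace $f(t)$ by $f(3t/\alpha)$, which multiplies $g$ by a positive constant and composes it with an increasing linear map, hence preserves both the monotonicity of $g$ and the failure of convexity of $f$ inside $(0,\alpha)$. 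Finally, your closing remark that (iii) already forces $f(0)\le 0$ is correct and coincides with the observation the paper records at the end of its proof of Theorem~\ref{Th:(ii)-(iii)}.
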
 

\begin{proof}  We only mention about converse. In fact, for the function \(f(t) = - t^3 + 2t^2 - t\) we see that \(g(t) = - (t - 1)^2\) is monotone
increasing in the interval $(0,1)$ but $f$ is not convex in $[0,t)$. The other case for the interval $[0,\alpha)$ is simply a consequence 
of composition function by
$f$ and the transferring function from $[0,\alpha)$ to $[0,1)$, and this holds even in the case of the positive half line. By Theorem~\ref{Th:(ii)-(iii)}
we need not discuss about (ii).
\end{proof}

\vskip 2mm

We have been however unable to decide even in the case $n = 2$ whether (i) implies (ii) or not although we can easily find a function
$f(t)$ which is not 2-convex but $g(t)$ is 2-monotone in $(0,\alpha)$.  On the other hand, we notice that there are abundance of examples of
2-convex functions in those intervals for which their associated functions are also 2-monotone. 
In fact for instance, we can show the following.

\begin{prp}\label{example(i)to(ii)} If $f(t)$ is a 2-convex polynomial of the degree not greater than 5 in $[0,\alpha)$ with $f(0)\leq 0 $, 
then $g(t)$ is 2-monotone in $(0,\alpha)$.
\end{prp}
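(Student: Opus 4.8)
The plan is to reduce to the normalized case $f(0)=0$ and then to recognize $2$-monotonicity of $g(t)=f(t)/t$ as a Loewner determinant condition expressed through divided differences of $f$. By Lemma~\ref{simpleobservation}, passing from $f$ to $h(t)=f(t)-f(0)$ preserves $2$-convexity, and it is enough to show that $h(t)/t$ is $2$-monotone; so I may assume from the outset that $f(0)=0$, in which case $g(t)=f(t)/t$ and I must prove $g$ is $2$-monotone on $(0,\alpha)$. Writing $f(t)=\sum_{j=1}^{5}c_j t^{j}$, the function $g$ is a polynomial of degree at most $4$, and the decisive elementary remark is that $g(s)=[s,0]_f$. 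Combining this with the composition rule for divided differences (the identity $[r_1,r_2]_{h_s}=[r_1,r_2,s]_f$ already used above, taken at $s=0$) gives
\[
[s,t]_g=[s,t,0]_f,\qquad g'(s)=[s,s,0]_f .
\]
Hence, by the Loewner criterion \cite[Theorem 6.6.36 (1)]{HJ}, $g$ is $2$-monotone on $(0,\alpha)$ precisely when the matrix
\[
\begin{pmatrix} [s,s,0]_f & [s,t,0]_f \\ [s,t,0]_f & [t,t,0]_f \end{pmatrix}
\]
is positive semidefinite for all $s,t\in(0,\alpha)$.

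Next I would separate the trivial part from the essential one. The preceding theorem (that assertion (i) forces $g$ to be $(n-1)$-monotone, here $1$-monotone) already guarantees $g'(s)=[s,s,0]_f\ge 0$ on $(0,\alpha)$, so the diagonal entries cause no trouble and everything comes down to the determinant inequality
\[
[s,s,0]_f\,[t,t,0]_f\ \ge\ [s,t,0]_f^{\,2},\qquad s,t\in(0,\alpha).
\]
For $\deg f\le 5$ the second divided difference is the explicit symmetric polynomial
\[
P(x,y):=[x,y,0]_f=c_2+c_3(x+y)+c_4(x^2+xy+y^2)+c_5(x^3+x^2y+xy^2+y^3),
\]
so the inequality to be established is $P(s,s)P(t,t)\ge P(s,t)^2$. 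Since $P(s,s)P(t,t)-P(s,t)^2$ vanishes on the diagonal $s=t$, it is divisible by $(s-t)^2$, and the task reduces to proving nonnegativity on $(0,\alpha)^2$ of the quotient $Q(s,t)$, a symmetric polynomial whose coefficients are explicit in $c_2,\dots,c_5$.

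The main obstacle is exactly this nonnegativity of $Q$, and it is where the degree restriction is used. I would convert the hypothesis that $f$ is $2$-convex on $[0,\alpha)$ into explicit inequalities on $c_2,\dots,c_5$ (and on $\alpha$): for a degree-$5$ polynomial this amounts to the positive semidefiniteness throughout $[0,\alpha)$ of the convexity matrix $\bigl(f^{(i+j)}(t)/(i+j)!\bigr)_{i,j=1}^{2}$, i.e. $f''\ge 0$ and $3f''f^{(4)}\ge 4(f''')^{2}$ on $[0,\alpha)$. The crux is to show that these constraints force $Q\ge 0$; because $\deg f\le 5$ bounds the degree of $Q$, I expect $Q$ to admit a sum-of-squares type certificate assembled from the convexity inequalities. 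I anticipate such a certificate to exist only under the degree bound, which is consistent with the fact, emphasized in the paper, that $(i)\Rightarrow(iii)$ is open for general $2$-convex functions. Concretely I would carry out the division by $(s-t)^2$, record the coefficient inequalities coming from $2$-convexity, and verify $Q\ge 0$ by the resulting finite (if laborious) computation.
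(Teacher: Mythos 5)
Your reduction is set up correctly, and it is in fact a genuinely different route from the paper's: after normalizing $f(0)=0$ via Lemma~\ref{simpleobservation}, you invoke the two-point Loewner criterion \cite[Theorem 6.6.36]{HJ} together with the identity $[s,t]_g=[s,t,0]_f$, dispose of the diagonal entries $g'(s)=[s,s,0]_f\ge 0$ by the paper's Theorem 3.2 (with $n=2$), and thereby reduce everything to the determinant inequality $[s,s,0]_f\,[t,t,0]_f\ge [s,t,0]_f^{\,2}$ on $(0,\alpha)^2$. The gap is that you stop exactly where the proof has to begin: the nonnegativity of the quotient $Q(s,t)$ is the entire content of the proposition, and your proposal replaces its proof by a statement of intent (``I would carry out the division \dots and verify $Q\ge 0$ by the resulting finite (if laborious) computation'') together with the hope that a sum-of-squares certificate exists. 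No certificate is exhibited, and its existence is not a routine matter: $Q$ is a symmetric two-variable polynomial of degree about six, and the hypotheses you may use ($f''\ge 0$, $3f''f^{(4)}\ge 4(f''')^2$ on $[0,\alpha)$, plus the Kraus conditions $a_2,a_4\ge 0$, $a_2a_4\ge a_3^2$ at the origin from \cite{Kr}) are themselves $t$-dependent constraints, so this is not a mechanical check of finitely many coefficient signs. As it stands, the decisive step is missing.

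For comparison, the paper avoids the two-variable problem entirely by using the one-point (local) criterion \cite[VIII Theorem V]{Dg}: $g$ is 2-monotone iff the matrix $\bigl(g^{(i+j-1)}(t)/(i+j-1)!\bigr)_{i,j=1}^{2}$ is positive semidefinite for every $t$. This turns the problem into one-variable estimates, which the paper establishes explicitly: $g'(t)\ge\tfrac15 f''(t)\ge 0$, $\ g'''(t)/3!\ge\tfrac45\,f^{(4)}(t)/4!\ge 0$, and the determinant comparison
\[
\det\Bigl(\tfrac{g^{(i+j-1)}(t)}{(i+j-1)!}\Bigr)-\tfrac45\det\Bigl(\tfrac{f^{(i+j)}(t)}{(i+j)!}\Bigr)
=\tfrac15\,(a_2a_4-a_3^2)+2t^2\bigl(a_4^2+6a_4a_5t+10a_5^2t^2\bigr)\ \ge\ 0 ,
\]
where the right-hand side is nonnegative using only the Kraus conditions at $t=0$ and completing the square in the last factor. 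These three explicit estimates are precisely the analogue of the computation your plan defers, and your two-variable version of it would, if anything, be harder. To repair the proposal you would either have to actually produce the certificate for $Q$, or switch to the local criterion as the paper does.
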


\begin{proof}

From Lemma~\ref{simpleobservation}  we may assume that 
$f(t) = a_1t + a_2t^2 + a_3t^3 + a_4t^4 + a_5t^5$. 

Suppose that $f$ is 2-convex on $[0,\alpha)$. 
Then 
$$(\frac{f^{(i+j)}(0)}{(i+j)!}) =
\left(\begin{array}{cc}
        a_2&a_3\\
        a_3&a_4
\end{array}
\right)
$$ is positive semi-definite by \cite{Kr}. 
That is, $a_2 \geq 0$, $a_4 \geq 0$, and $a_2a_4 - a_3^2 \geq 0$.  

Then we have 
\begin{align*}
|(\frac{g^{(i+j-1)}(t)}{(i+j-1)!})| - \frac{4}{5}|(\frac{f^{(i+j)}(t)}{(i+j)!})| 
&= \frac{1}{5}(a_2a_4 - a_3^2) + 2t^2(a_4^2 + 6a_4a_5t + 10a_5^2t^2)\\
&= \frac{1}{5}(a_2a_4 - a_3^2) + 
2t^2\{10a_5^2(t + \frac{3}{10}\frac{a_4}{a_5})^2 + \frac{1}{10}a_4^2\} \geq 0,
\end{align*}
for any $t \geq 0$, where $|\quad |$ means the determinant of a given matirix.
Hence 
$|(\frac{g^{(i+j-1)}(t)}{(i+j-1)!})| \geq 0$ for $t \in [0, \alpha)$.

On the contrary,


\begin{align*}
g^{(1)}(t) - \frac{1}{5}f^{(2)}(t) 
&= a_2 + 2a_3t + 3a_4t^2 + 4a_5t^3 - \frac{2}{5}(a_2 + 3a_3t + 6a_4t^2 + 10a_5t^3)\\
&= \frac{3}{5}a_2 + \frac{4}{5}a_3t + \frac{3}{5}a_4t^2\\
&= \frac{3}{5}a_2 + \frac{3a_4}{5}(t + \frac{2}{3a_4}a_3)^2 - \frac{4}{15a_4}a_3^2 \ (\hbox{if}\ a_4\not= 0)\\
&\geq \frac{3}{5}a_2 + \frac{3a_4}{5}(t+ \frac{2}{3a_4}a_3)^2 - \frac{4}{15}a_2\ (a_2a_4 \geq a_3^2)\\
&= \frac{1}{3}a_2 + \frac{3a_4}{5}(t+ \frac{2}{3a_4}a_3)^2 \\
&\geq 0
\end{align*}
for $t \geq 0$. If $a_4 = 0$, then $a_3 = 0$. Hence we have 
$$
g^{(1)}(t) - \frac{1}{5}f^{(2)}(t) = \frac{3}{5}a_2 \geq 0.
$$
In any case we  have 
\begin{align*}
g^{(1)}(t) \geq \frac{1}{5}f^{(2)}(t) \geq 0
\end{align*}
for $t \in [0, \alpha)$. 
Similarily, we have 
\begin{align*}
\frac{g^{(3)}(t)}{3!} \geq \frac{4}{5}\frac{f^{(4)}(t)}{4!} \geq 0
\end{align*}
for $t \in [0, \alpha)$.

The above argument implies that the matrix $(\frac{g^{(i+j-1)}(t)}{(i+j-1)!})$ is 
positive semi-definite on $[0, \alpha)$. 
Therefore, we conclude that $g$ is 2-monotone on $[0, \alpha)$ 
by \cite[VIII Theorem V]{Dg}
\end{proof}

\vskip 5mm

From this proposition, we see that for $n = 2$ either the assertion (iii) does not necessarily imply the assertion (i).


In this direction, we have another result.

\begin{prp}
If $f(t)$ is 2-convex in $[0,\alpha)$, then the indefinite integral of $g(t)$ becomes also 2-convex in $(0,\alpha)$.
\end{prp}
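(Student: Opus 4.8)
The plan is to work throughout with the pointwise Kraus criterion for $2$-convexity already used in this section. Writing $G$ for an indefinite integral of $g$ (the additive constant is irrelevant to convexity), I recall that a sufficiently smooth $\Phi$ is $2$-convex on an open interval exactly when the Hankel matrix
\[
\begin{pmatrix} \frac{\Phi''(t)}{2} & \frac{\Phi'''(t)}{6}\\[2pt] \frac{\Phi'''(t)}{6} & \frac{\Phi^{(4)}(t)}{24}\end{pmatrix}
\]
is positive semidefinite for every $t$, which is the criterion invoked via \cite{Kr}. Applying it to $\Phi=G$ and using $G''=g'$, $G'''=g''$, $G^{(4)}=g'''$, the goal reduces to verifying $g'\ge 0$, $g'''\ge 0$ and $3g'g'''\ge 4(g'')^2$ on $(0,\alpha)$. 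Since this matrix depends linearly on $\Phi$ and the positive semidefinite matrices form a cone, $2$-convexity is stable under sums; I will use this to peel off the contribution of $f(0)$.

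First I would normalize. By Lemma~\ref{simpleobservation}(1) the function $h=f-f(0)$ is again $2$-convex and satisfies $h(0)=0$, and
\[
\int \frac{f(t)}{t}\,dt = \int\frac{h(t)}{t}\,dt + f(0)\log t .
\]
A direct computation shows that $c\log t$ is $2$-convex on $(0,\alpha)$ precisely when $c\le 0$: its Hankel matrix has diagonal entries $-c/(2t^2)$ and $-c/(4t^4)$ and determinant $c^2/(72\,t^6)$. As $f$ is assumed to satisfy assertion (i), we have $f(0)\le 0$ (this hypothesis is genuinely needed, since $f(t)=(t-1)^2$ is $2$-convex but $\int f/t$ is not even convex near $0$), so $f(0)\log t$ is $2$-convex. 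By the cone property it therefore suffices to treat the case $f(0)=0$, i.e. to prove that $G_h:=\int h/t\,dt$ is $2$-convex.

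With $h(0)=0$ I introduce $P=th'-h$, $Q=t^2h''-2th'+2h$ and $R=t^3h'''-3t^2h''+6th'-6h$, so that for $g_h=h/t$ one has $g_h'=P/t^2$, $g_h''=Q/t^3$, $g_h'''=R/t^4$, and the Hankel matrix of $G_h$ is positive semidefinite iff $P\ge 0$, $R\ge 0$ and $3PR\ge 4Q^2$. A one-line differentiation gives the key identities $P'=t\,h''$, $Q'=t^2h'''$, $R'=t^3h^{(4)}$ together with $P(0)=Q(0)=R(0)=0$, whence
\[
P=\int_0^t s\,h''(s)\,ds,\qquad Q=\int_0^t s^2 h'''(s)\,ds,\qquad R=\int_0^t s^3 h^{(4)}(s)\,ds .
\]
Since $2$-convexity of $h$ forces $h''\ge 0$ and $h^{(4)}\ge 0$ pointwise (the diagonal entries of its Hankel matrix), we immediately obtain $P\ge 0$ and $R\ge 0$, which are exactly the first two conditions.

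The main obstacle is the determinant inequality $3PR\ge 4Q^2$. Here I would invoke the off-diagonal constraint coming from $2$-convexity of $h$, namely the pointwise bound $h'''(s)^2\le \tfrac34\,h''(s)h^{(4)}(s)$, and apply the Cauchy--Schwarz inequality to the integral defining $Q$, splitting the integrand as $s^2h'''=\sqrt{s\,h''}\cdot\bigl(s^{3/2}h'''/\sqrt{h''}\,\bigr)$:
\[
Q^2 \le \Big(\int_0^t s\,h''(s)\,ds\Big)\Big(\int_0^t \frac{s^3 h'''(s)^2}{h''(s)}\,ds\Big)\le P\cdot\tfrac34\int_0^t s^3 h^{(4)}(s)\,ds=\tfrac34\,PR ,
\]
where the middle step uses the pointwise bound (on the locus $h''=0$ the same bound forces $h'''=0$, so the integrand is harmless). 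This gives $4Q^2\le 3PR$, completing the positive semidefiniteness of the Hankel matrix of $G_h$; combined with the $2$-convex term $f(0)\log t$ through the cone property, it yields the $2$-convexity of $\int g$. The only real work is the Cauchy--Schwarz estimate above, and for a $2$-convex $f$ lacking enough smoothness I would reach the conclusion by the standard mollification argument, $2$-convexity being preserved under the relevant limits.
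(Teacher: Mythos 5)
Your proof is correct, and while it rests on the same underlying idea as the paper's --- integrate the pointwise Kraus data of $f$ from $0$ to $t$ and read off the Hankel criterion for $\int g$ --- its execution is genuinely different. The paper stays in matrix form throughout: from the identity $t^{k-1}f^{(k)}(t)=(t^{k}g^{(k-1)}(t))'$ it deduces that the matrix with entries $\tfrac12(s^{2}g')',\ \tfrac16(s^{3}g'')',\ \tfrac1{24}(s^{4}g''')'$ is positive semidefinite (the Kraus inequality for $f$ multiplied by powers of $s$), integrates it entrywise from $s$ to $t$, identifies the limit of the boundary term as $s\to 0$ with the matrix having entries $-\tfrac12 f(0),\ \tfrac13 f(0),\ -\tfrac14 f(0)$ (positive semidefinite precisely because $f(0)\le 0$), and finally strips off the powers of $t$ by a Hadamard product with the rank-one matrix $(1/t^{i+j})$. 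Your decomposition $\int f/t=\int h/t+f(0)\log t$ is exactly that boundary matrix made explicit: the Hankel matrix of $f(0)\log t$ is the paper's limit matrix rescaled by $1/t^{i+j}$, so your check that $c\log t$ is $2$-convex iff $c\le 0$ is the same computation in disguise. Where you truly diverge is the determinant inequality: instead of invoking that an entrywise integral of positive semidefinite matrix functions is positive semidefinite together with Schur products, you unpack it as the scalar Cauchy--Schwarz bound $Q^{2}\le\tfrac34 PR$ from the pointwise inequality $(h''')^{2}\le\tfrac34\,h''h^{(4)}$, with a correct treatment of the zero set of $h''$. This is more elementary (no Schur product theorem, no matrix-valued integration), though it is tailored to the $2\times2$ case, whereas the paper's formulation would carry over verbatim to $K_n$. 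One genuine merit of your write-up worth keeping: you make explicit that the hypothesis $f(0)\le 0$ is indispensable --- the proposition as stated omits it, while the paper's own proof silently uses it at the limit-matrix step --- and your example $f(t)=(t-1)^{2}$, for which $\int f(t)/t\,dt=t^{2}/2-2t+\log t$ fails to be convex near $0$, shows the statement is false without it.
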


\begin{proof} By applying the regulariation procedure (cf.\cite [chap.1.4]{Dg}) we may assume that $f$ is in the class $C^4$.
We first notice that
\[
f^{(k)}(t) = tg^{(k)}(t) + kg^{(k-1)}(t) \quad \mbox{ for \(1 \leq k \leq 4\)},
\]
which implies the relations
\[
t^{(k-1)}f^{(k)}(t) = (t^{(k)}g^{(k-1)}(t))' \quad \mbox{ for \(2\leq k \leq 4\)}.
\]
It follows that the matrix 
\[
K_2(f;t) = 
\left(
\begin{array}{cc}
\frac{1}{2}f^{(2)}(t)& \frac{1}{6}f^{(3)}(t)\\
\frac{1}{6}f^{(3)}(t)& \frac{1}{24}f^{(4)}(t)
\end {array}
\right)
\]
is positive semidefinite. Therefore both derivatives $f^{(2)}$ and $f^{(4)}$ are non-negative, and we have the inequality derived
from the determinant of the above matrix,
\[
\frac{1}{4}f^{(2)}(t)f^{(4)}(t) - \frac{1}{3}(f^{(3)}(t))^2 \geq 0.
\]
Hence,
\[  \frac{1}{4}(t^2g'(t))'(t^4 g^{(3)}(t))' - \frac{1}{3}((t^3 g^{(2)}(t))')^2 \geq 0.
\]
Therefore, we see that
\[\left(
\begin{array}{cc}
\frac{1}{2}(t^2 g'(t))'& \frac{1}{6}(t^2g''(t))'\\
\frac{1}{6}(t^3g''(t))'& \frac{1}{24}(t^4g'''(t))
\end{array}
\right)
\geq 0 \quad \mbox { for every $t$ in the interval}.
\]
Thus, integrating this matrix from $s$ to $t$ we assert that
\[
\left(
\begin{array}{cc}
\frac{1}{2}t^2g'(t)& \frac{1}{6}t^3 g''(t)\\
\frac{1}{6}t^3 g''(t) & \frac{1}{24}t^4 g '''(t)
\end{array}
\right)
- 
\left(
\begin{array}{cc}
\frac{1}{2}s^2 g'(s) & \frac{1}{6} s^3 g''(s)\\
\frac{1}{6}s^3 g''(s)& \frac{1}{24}s^4 g'''(s)
\end{array}
\right)
\geq 0.
\]
Now consider the limit matrix of the second member when $s$ goes to $0$. Using relations between those derivatives $f^{(k)}$ and $g^{(k)}$ 
mentioned at first, we see that the limit matrix has the form,
\[\left(
\begin{array}{cc}
-\frac{1}{2}f(0)& \frac{1}{3}f(0)\\
\frac{1}{3}f(0) & - \frac{1}{4}f(0)
\end{array}
\right)\],
which is obviously positive semi-definite because \(f(0)\leq 0\). 
It follows that the matrix
\[\left(
\begin{array}{cc}
\frac{1}{2}t^2g'(t)& \frac{1}{6}t^3g''(t)\\
\frac{1}{6}g''(t)& \frac{1}{24}t^4g'''(t)
\end{array}
\right)\]
is positive semidefinite. We have here the identity
\[\left(
\begin{array}{cc}
\frac{1}{2}g'(t)& \frac{1}{6}g''(t)\\
\frac{1}{6}g''(t)& \frac{1}{24}g'''(t)
\end{array}\right)
=
\left(
\begin{array}{cc}
\frac{1}{t^2}& \frac{1}{t^3}\\
\frac{1}{t^3} & \frac{1}{t^4} 
\end{array}
\right)
\circ
\left(
\begin{array}{cc}
\frac{1}{2}t^2g'(t)& \frac{1}{6}t^3 g''(t)\\
\frac{1}{6}t^3 g''(t) & \frac{1}{24}t^4 g '''(t)
\end{array}
\right)
\]
where $\circ$ means the Hadmard product. Since the Hadmard product of positive semidefinite matrices becomes
positive semidefinite we can conclude that the matrix
\[\left (
\begin{array}{cc}
\frac{1}{2}g'(t)& \frac{1}{6}g''(t)\\
\frac{1}{6}g''(t)& \frac{1}{24}g'''(t)
\end{array}\right)
\]
is positive semi-definite on $(0, \alpha)$.

By the characterization of the 2-convexity (\cite [Theorem 2.3]{HT}), 
we obtain the conclusion. This completes the proof.
\end{proof}

\end{document}